\newcommand{\C}{\mathbb{C}}
\newcommand{\ba}{\begin{array}}
\newcommand{\be}{\begin{equation}} 
\newcommand{\ea}{\end{array}}
\newcommand{\ee}[1]{\label{#1}\end{equation}}
\renewcommand{\b}[1]{\mathbf{#1}}
\newcommand{\ds}{\displaystyle}
\newcommand{\mcs}{\mathcal{S}}
\newcommand{\mcm}{\mathcal{M}}
\newcommand{\mcu}{\mathcal{U}}
\newcommand{\mcv}{\mathcal{V}}
\newcommand{\mcw}{\mathcal{W}}
\newcommand{\mcy}{\mathcal{Y}}
\newcommand{\mbp}{\mathbb{P}}
\newcommand{\sdp}{S^{[d]}_p}
\newcommand{\del}{\partial}
\renewcommand{\phi}{\varphi}
\renewcommand{\epsilon}{\varepsilon}
\newcommand{\frs}[2]{\hbox{$\ds \frac{#1}{#2}$}}
\DeclareMathOperator{\diag}{diag}		
\DeclareMathOperator{\Spec}{Spec}
\newtheorem{theorem}{Theorem}[section]
\newtheorem{lemma}[theorem]{Lemma}
\newtheorem*{lemma*}{Lemma}
\newtheorem{corollary}[theorem]{Corollary}
\newtheorem{prop}[theorem]{Proposition}
\newtheorem{definition}[theorem]{Definition}
\newtheorem*{definition*}{Definition}
\newtheorem{example}{Example}[section]
\newtheorem*{example*}{Example}
\newtheorem{rem}[theorem]{Remark}
\newtheorem*{rem*}{Remark}
\numberwithin{equation}{section}
\author{Niccol\`o Lora Lamia Donin}
\date{\today}
\title{Transverse Hilbert Schemes and Completely Integrable Systems}
\address{Institut f\"ur Differentialgeometrie, Leibniz Universit\"at Hannover
 \endgraf  Welfengarten 1, 30167
- Hannover - Germany}
\email{loralamia at math.uni-hannover.de} 
\subjclass[2010]{53D05, 37J35, 14C05, 53C26}
\keywords{Holomorphic Completely Integrable Systems, Symplectic Geometry, Transverse Hilbert Schemes}
\begin{document}
\begin{abstract}
In this paper we consider a special class of completely integrable systems that arise as transverse Hilbert schemes of $d$ points of a complex symplectic surface $S$ projecting onto $\C$ via a surjective map $p$ which is a submersion outside a discrete subset of $S$. We explicitly endow the transverse Hilbert scheme $S^{[d]}_{p}$ with a symplectic form and an endomorphism $A$ of its tangent space with $2$-dimensional eigenspaces and such that its characteristic polynomial is the square of its minimum polynomial and show it has the maximal number of commuting Hamiltonians. We then provide the inverse construction, starting from a $2d$-dimensional holomorphic integrable system $\mcw$ which has an endomorphism $A \colon T\mcw \rightarrow T\mcw$ satisfying the above properties and recover our initial surface $S$ with $\mcw \cong \sdp$. \end{abstract}

\maketitle

\section{Introduction}
As it is well-known (see \cite{ACdS}), a Hamiltonian system is called \emph{completely integrable} if and only if it possesses the maximal number of independent Poisson-commuting first integrals of motion. In the present article we shall work in the holomorphic category and associate to a complex surface endowed with a holomorphic symplectic form and a Hamiltonian function a holomorphic completely integrable system of complex dimension $2d$. Our aim is to describe such a construction and provide a full characterization of the completely integrable systems that arise from it.\\
We will proceed according to the following plan.\\
In section 2 we start from a complex surface $S$ holomorphically projecting onto $\C$ via 
a map $p$ and recall from \cite{AH} the definition of Hilbert scheme of $d$ points transverse to $p$, which we denote by $\sdp$. We also recall from \cite{AB83} and \cite{AH}, \cite{RB15} that $\sdp$ is an open subset of the full Hilbert scheme $S^{[d]}$ of length $d$ $0$-dimensional subschemes of $S$, which is smooth and is of complex dimension $2d$ hence so is $\sdp$. Next, we show how $p$ induces a natural endomorphism $A$ of the tangent bundle $T\sdp$ such that at every point of $\sdp$ its characteristic polynomial is the square of its minimal polynomial and its eigenspaces all have complex dimension $2$.\\
In section 3 (Proposition \ref{prop: endom inverse construction}) we show how the eigenvalues and eigenspaces of $A$ determine the geometry of $S$ and $\sdp$, meaning that, starting from a $2d$-$\dim_{\C}$ manifold $\mcw$ endowed with an endomorphism $A$ of $T\mcw$ with the above properties, $S$ is recovered as the leaf space of a foliation induced by $A$ and $\mcw$ is identified with $\sdp$.\\
Section 4 is finally devoted to the characterization of $\sdp$ as a holomorphic completely integrable system. Assuming that the surface $S$ carries a holomorphic symplectic $2$-form $\omega$ we recover Beauville's result (\cite{AB83}) and show that $\omega$ induces a symplectic structure on $\sdp$, for which we point out the existence of $d$ independent Poisson-commuting Hamiltonians, thus giving $\sdp$ the structure of a completely integrable system. The remainder of the section is then focused on reversing the construction: we start from a complex $2d$-dimensional completely integrable system, endowed with the extra feature of an endomorphism $A$ of its tangent bundle with the aforementioned properties and show how, under some compatibility assumptions between the symplectic form and $A$, $\mcw$ is recovered as the integrable system constructed as the transverse Hilbert scheme of $d$ points of a surface $S$ with a projection $p \colon S \rightarrow \C$.
\begin{rem}\rm
The motivation for this paper comes from the link between the theory of completely integrable systems and the geometry of manifolds of higher degree curves in twistor spaces. As explained in \cite{RB14}, given a holomorphic fibration $Z \xrightarrow{\pi} T\mbp^1 \xrightarrow{\pi_1} \mbp^1$ such that $Z$ has a real structure covering the antipodal map of $\mbp^1$ and $\dim_{\C}Z=3$, the space $\mcm_d$ of real ``degree $d$" curves in $Z$ satisfying appropriate conditions is a hyperkähler manifold of real dimension $4d$. When one complex structure $I_{\zeta}$ among the $S^2$ of possible complex structure is fixed, there is a finite unramified covering map
\begin{align}
\psi \colon \left(\mcm_d, I_{\zeta}\right) &\rightarrow \left(Z_{\zeta}^{[d]}\right)_{\pi}\\
C &\mapsto C_{\zeta},
\end{align}
where, keeping the notation of \cite{RB14}, $ \left(Z_{\zeta}^{[d]}\right)_{\pi}$ is the Hilbert scheme of $d$ points in $Z_{\zeta}\coloneqq \tilde{\pi}^{-1}(\zeta)$ transverse to the projection $\tilde{\pi}\vert_{Z_{\zeta}}=\left(\pi_1 \circ \pi\right)\vert_{Z_{\zeta}}$, and $C_\zeta=C \cap Z_{\zeta}$.\\
Following the construction of Sections 3 and 4, an endomorphism $A$ can be constructed on $T_{C_{\zeta}}\left(Z_{\zeta}^{[d]}\right)_{\pi}$ for every $C \in \mcm_d$, which has the mentioned properties on the eigenspaces and characteristic polynomial, fitting the above scenario to our description.\\
Examples of such manifolds are the ones obtained by the generalized Legendre transform such as the moduli space $\mcm_k$ of charge $k$ monopoles. As described by Atiyah and Hitchin in \cite{AH} we know that, once equipped with one chosen complex structure, $\mcm_k$ is diffeomorphic to the space $R_k$ of based rational maps of degree $k$, i.e.
\begin{align*}
R_k=\left\{ \frs{p(z)}{q(z)} \vert ~ \deg q=k, \deg p=k-1, q \text{ is monic and } p(z)\ne 0 \text{ if } q(z)=0 \right\}
\end{align*}
which, in turn, is equivalent to $S^{[k]}_{p}$ where $S=\C \times \C^*$ and $p$ is the projection onto the first factor.\\
\end{rem} 
\subsection*{Acknowledgements}
The author wishes to acknowledge Roger Bielawski for suggesting the problem as well as for his constant supervision and countless fruitful discussions. He also thanks Simon Salamon for his comments and suggestions during the stay in London and the DFG Graudiertenkolleg 1463 ``Analysis, Geometry and String Theory" for the financial support.

\
\section{An Endomorphism of the Tangent Space}
Let $S$ be a complex surface (for the moment being we do not assume it to be symplectic) with a surjective holomorphic projection $p \colon S \rightarrow \C$ which is a submersion outside a discrete subset $B \subset S$. We recall from \cite{RB15} the following definition of \emph{transverse Hilbert scheme of $S$ of $d$ points} with respect to the projection $p$.
\begin{definition}
The length $d$ Hilbert scheme of $S$ transverse to the projection $p$ is the subset $S^{[d]}_{p}$ of the full length $d$ Hilbert scheme $S^{[d]}$ of $S$ consisting of those $0$-dimensional subschemes $Z$ of length $d$ such that $p \colon Z \rightarrow p(Z)$ is an isomorphism onto the scheme-theoretic image.
\end{definition}

\begin{rem}\rm
It is clear from the definition that  $p$ induces a surjective holomorphic map $p^{[d]} \colon S^{[d]}_{p} \rightarrow \C^{[d]}\cong S^d(\C)$, where $S^d(\C)$ stands for the $d$-th symmetric power of $\C$.
\end{rem}

\begin{rem}\rm
A practical interpretation of the transversality condition is the following: taken $Z \in \sdp$, if $z \in p(Z) \subset \C$ is a point with multiplicity $k$, then it will correspond via $p$ to a point $s \in p^{-1}(z)$ also of multiplicity $k$. A scenario with points $s_1, \dots, s_j$ of multiplicity $k_1, \dots, k_j$, $\sum k_i=k$, all lying in $p^{-1}(z)$ is hence excluded.
\end{rem}

On the tangent space to $\sdp$ a natural endomorphism is defined as follows.
Let  $q(z)$ be the monic polynomial of degree $d$ defining the image $p(Z)$ of $Z$ via $p$ and observe that $H^0(Z,\C) \cong \C[z]/(q(z))$, where the generator $z$ stands as a preferred element.
Recall now that, for every $Z \in \sdp$, one has $T_Z\sdp \cong H^0(Z, TS\vert_Z)$ due to a well-known theorem of Kodaira (\cite{KOD62}).
 Then we set $A_Z$ to be the map 
\begin{align}
\begin{split}
H^0(Z, TS\vert_Z) &\longrightarrow H^0(Z,TS\vert_Z)  \\[6pt]
\sigma(z) & \mapsto f(z)\sigma(z),
\end{split}
\end{align}
where we take $f \colon X \rightarrow \C$ to be the function $z \in H^0(Z,\C)$.

\begin{rem}\rm
If $\sigma \in H^0(Z,TS\vert_Z)$ and $p\in S$ is a point of $Z$ with multiplicity one then $(A\sigma)(p)=z(p)\sigma(p)$. If, instead, $p$ has multiplicity $k>1$, we recall from  \cite[Proposition 2.4]{AHH90} that the section $\sigma$ is given as a power series in $(z-z(p))$ truncated at order $k$, that is 
\begin{align}\label{eq: power series sigma}
\sigma(z)=\sigma(z(p))+\sigma'(z(p))(z-z(p))+\dots+\frs{\sigma^{(k)}(z(p))}{k!}(z-z(p))^k.
\end{align}
Then $A\sigma$ will be give the truncated power series of $z\sigma(z)$, that is
\begin{align}\label{eq: power series A(sigma)}
(z\cdot\sigma)(z)&=z(p)\sigma(z(p))+(\sigma(z(p))+z(p)\sigma'(z(p)))(z-z(p))+\dots\\[6pt]
&+\frs{k\sigma^{(k-1)}(z(p))+z(p)\sigma^{(k)}(z(p))}{k!}(z-z(p))^k.
\end{align}
Comparing \eqref{eq: power series A(sigma)} and \eqref{eq: power series sigma} we deduce that the eigenspaces of $A$ are of dimension $2$. Also, the eigenvalues have even multiplicity each one equal to the dimension of the relative power expansion space. These two observations altogether yield, at each point of $\sdp$, the Jordan canonical form of $A$.
\end{rem}

\begin{example}[The space of rational maps] \label{ex: rat map and AH}\rm
The machinery we have introduced so far allows us to build such an endomorphism $A$ on the tangent space to the space of based rational maps of degree $d$. As an example we compute it for $d=2$.\\
Let us define the complex surface $S=\C \times \C^*$ projecting onto $\C$ via $p$ which we interpret as the moduli space of charge $1$ monopoles and let $S^{[2]}_{p}$ be its Hilbert scheme of points of length $2$ transverse to $p$. We identify (see \cite{AH}) $S^{[2]}_{p}$ with the space of all based rational map of degree $2$, defined by 
\begin{equation}
R_2=\left\{\frs{p(z)}{q(z)}=\frs{a_1z+a_0}{z^2-q_1z-q_0} \vert~ p(z) \text{ and } q(z) \text{ have no common roots } \right\}.
\end{equation}
Observe that a tangent vector to $R_2$ at a point $(p(z), q(z))$ is given as a couple of degree $1$ polynomials $(q'(z),p'(z))$ where we write $q'(z)=q'_1z+q'_0$ and $p'(z)=p'_1z+p'_0$. Applying the previous construction we get an endomorphism $A$ of the tangent bundle to $R_2$ which on the tangent space to $R_2$ at each point $(p(z),q(z))$ operates as multiplication by $z$ modulo $q(z)$. This means that
\begin{align*}
A_{(q(z),p(z))} \colon T_{(q(z),p(z))}R_2 & \longrightarrow  T_{(q(z),p(z))}R_2 \\[6pt]
(q'_1z+q_0, p'_1z+p_0) & \mapsto ((q_1q'_1+q'_0)z+q_0q'_1, (q_1p'_1+p'_0)z+q_0p'_1)
\end{align*}
is represented at $(p(z), q(z))$ by the block-diagonal matrix 
\begin{equation}\label{mult by z mod q}
A_{(q(z),p(z))} \coloneqq
\begin{pmatrix}
q_1 & 1 & 0 & 0 \\[6pt]
q_0	& 0 & 0 & 0 \\[6pt]
0 & 0 & q_1 & 1 \\[6pt]
0 & 0 & q_0 & 0 
\end{pmatrix}
\end{equation}
where each block is the so-called companion matrix of the polynomial $q(\lambda)$.\\
Let us now focus on the open dense subset of $R_2$ consisting of all based degree $2$ rational maps with simple poles. If a map $p(z)/q(z)$ has distinct poles, i.e. the roots of $q$ are distinct, then it can be identified with the point $X=\left((\beta_1, p(\beta_1), (\beta_2, p(\beta_2))\right) \in S^{[2]}_{p}$, where the $\beta_i$'s are the roots of $q$ and $p(z)$ is recovered by Lagrange interpolation as the unique linear polynomial taking the values $p(\beta_i)$ at $\beta_i$. The projection $p \colon S \longrightarrow \C$ induces on every $X \in S^{[2]}_{p}$ a function $f \colon X \longrightarrow \C$ taking $(\beta_i, p(\beta_i))$ into $\beta_i \in \C$. Using the fact that $T_XS^{[2]}_{p} \cong H^0(X,TS\vert_X)$, the function $f$ induces an endomorphism $H^0(X,TS\vert_X) \longrightarrow H^0(X,TS\vert_X)$ given by $\sigma(x) \longrightarrow f(x)\sigma(x)$ for $x \in X$. In the tangent frame provided by these coordinates, $A$ at $(\beta_i, p(\beta_i))$ is represented by the diagonal matrix $\diag (\beta_1, \beta_1, \beta_2, \beta_2)$. Since on this open subset $q_0=-\beta_1\beta_2, q_1=\beta_1+\beta_2$, a computation shows that this diagonal matrix actually is the Jordan canonical form of \eqref{mult by z mod q}. \\
We observe also that, when $q_1=2\beta$ and $q_0=-\beta^2$ i.e. the rational map has a double pole at $z=\beta$, then the Jordan form of $A$ is 
\begin{equation}
\begin{pmatrix}
\beta & 1 & 0 & 0 \\[6pt]
0& \beta & 0 & 0 \\[6pt]
0 & 0 & \beta & 1 \\[6pt]
0 & 0 & 0 & \beta 
\end{pmatrix}
\end{equation}
\end{example}

\begin{example}\rm
Let us consider the double cover of the Atiyah-Hitchin manifold. As described in \cite{AH} this is a surface $S \subset \C^3$ defined by $S=\left\{(z,x,y) \vert~ x^2-zy^2=1 \right\}.$ We can therefore consider the Hilbert scheme of $d$ points of $S$ tranverse to the projection $p$ onto the first coordinate. We recall from \cite{RB15} that it can be described as the set of triple of polynomials $x(z), y(z), q(z)$ such that $x(z)$ and $y(z)$ have degree $d-1$, $q(z)$ is monic of degree $d$ and the equation $x^2(z)-zy^2(z)=1$ modulo $q(z)$ is verified.\\
An alternative description (also explained in \cite{RB15}), which we will use here, is obtained by considering the quadratic extension $z=u^2$. In this case the equation $x^2-zy^2=1$ is rewritten as $(x+uy)(x-uy)=1$ and we observe that $p(u)=x(u^2)+uy(u^2)$ is a polynomial of degree $2d-1$ in $u$ while $q(u^2)$ is a polynomial of degree $2d$ in $u$ which has no odd terms. The Hilbert scheme $\sdp$ is then described as the set of all couples of polynomials $(p(u),q(u^2))$ such that $p(u)p(-u)=1$ modulo  $q(u^2)$. Similarly to the previous example, a tangent element in $T_{(p(u),q(u^2))}\sdp$ is given by a couple of polynomials of the form
\begin{align}
p'(u)&=p'_0+p'_1u+\dots+p'_{2d-1}u^{2d-1}\\
q'(u^2)&=q'_0+q'_1u^2+\dots+q'_{d-1}u^{2d-1}
\end{align}
such that 
\begin{align}\label{HS of AH cover v2: tangent condition}
p'(u)p(-u)+p(u)p'(-u)=0 \text{ modulo } q(u^2).
\end{align}
We finally produce the endomorphism $A \colon T_{(p(u),q(u^2))}\sdp \rightarrow T_{(p(u),q(u^2))}\sdp$ at every $(p(u),q(u^2)) \in \sdp$ as multiplication by $u^2$ modulo $q(u^2)$, after observing that it preserves the space of solutions to \eqref{HS of AH cover v2: tangent condition}.
\end{example}

We now define the manifold $\tilde{S}^{[d]}_p$as the set of all $(z, Z) \in \C \times \sdp$ such that $z  \text{ is eigenvalue of }  A_Z$ and observe that it comes with a double projection 
\begin{equation}
\xymatrix{
&\tilde{S}^{[d]}_p \ar[d]_{\rho} \ar[dr]^{\pi}\\
&\C  &\sdp
}
\end{equation}
where $\pi$ is a branched $d:1$ covering of $\sdp$. Also, for every $X \in \sdp$, one can lift $A_Z$ to an endomorphism $\pi^*T_Z\sdp \longrightarrow \pi^*T_Z\sdp.$ Hence we draw the following diagram
\begin{equation}\label{eq: two sequences}
\footnotesize
\begin{tikzpicture}
\matrix (m) [matrix of math nodes, row sep=3em,
column sep=3em, text height=1.5ex, text depth=0.25ex]
{&		&			&0		&\\
&		&			&T^V_pS\vert_Z 		&\\
0 &  \pi^*T_Z\sdp  &  \pi^*T_Z\sdp &TS\vert_Z & 0 \\
 &		& 			&	T\C	& \\
&&  &0&\\};
\path[->] (m-1-4) edge (m-2-4);
\path[->] (m-2-4) edge (m-3-4);
\path[->](m-3-1) edge (m-3-2);
\path[->] (m-3-2) edge node[auto]{$z-A_Z$} (m-3-3);
\path[->] (m-3-3) edge (m-3-4);
\path[->](m-3-4) edge (m-3-5);
\path[->](m-3-4) edge (m-4-4);
\path[->](m-4-4) edge (m-5-4);
\path[->] (m-3-3) edge[dotted] (m-4-4);
\end{tikzpicture}
\end{equation}
Let $\beta$ be the function defined by the dotted arrow. We see that $Im(z-A_Z)$ lies in the kernel of $\beta$. Also, one has that elements of $\pi^*T_Z\sdp$ correspond to deformations of $\sdp$ at $Z$ and elements in $T^V_{\rho}\tilde{S}^{[d]}_p$ correspond to deformations fixing the eigenvalue. From this we get that $\ker \beta=T^V_{\rho} \tilde{S}^{[d]}_p$ and $Im(z-A) \subset T^V_{\rho}\tilde{S}^{[d]}_p$. 

\begin{rem}\rm
The holomorphic distribution $D\coloneqq Im(z-A)$ defined on $\tilde{S}^{[d]}_p$ is clearly involutive on the dense subset of $\tilde{B} \subset \tilde{S}^{[d]}_p$ consisting of all couples $(z, Z) \in \C\times \sdp$ with $Z$ a length $d$ $0$-dimensional subscheme of $S$ consisting of points that are all distinct, hence it is involutive on the whole $\tilde{S}^{[d]}_p$.
\end{rem}
\noindent
Construct now the double fibration 
\begin{equation}
\xymatrix{
&\mcy \ar[d] \ar[dr]\\
&S  &\sdp
}
\end{equation}
the manifold $\mcy$ being defined as $\mcy=\left\{(s,Z) \in S \times \sdp \vert~ s\in Z \right\}$. So far we notice that 
\begin{align*}
\tilde{S}^{[d]}_p &= \left\{ (z,Z) \in \C \times \sdp \vert~z \text{ is eigenvalue of } A_Z \right\}\\[6pt]
&=\left\{ (z,Z) \in \C \times \sdp \vert~z=p(x) \text{ for some } x \in Z\right\}\\[6pt]
&\cong \left\{(x,Z) \in S \times \sdp \vert ~ x \in Z \right\}.
\end{align*}
Hence we recover our initial surface as the space of leaves $S=\mcy/D \cong \tilde{S}^{[d]}_p/D$. \\
This suggests us the following \emph{inverse construction}.

\section{The Inverse Construction}
Let us start with a complex manifold $\mcw$ of complex dimension $2d$ endowed with an endomorphism $A: T\mcw \rightarrow T\mcw$ of its holomorphic tangent bundle $T\mcw$ with eigenvalues of even multiplicity and such that its characteristic polynomial is the square of its minimal polynomial. Set now $X \coloneqq \C^{[d]}$ Hilbert scheme of $d$ points of $\C$ and define a map $\mu \colon \mcw \longrightarrow X$ which assigns to each point $w \in \mcw$ the minimal polynomial of $A$ at $w$ which we denote $q_w(\lambda)$. Assume now $\mu$ to be a surjective submersion and define a vector field $\b{V} \in \mathfrak{X}(\mcw)$ to be projectable for $\mu$ if, for every $x \in X$, $d\mu_w(\b{V}_w)$ does not depend of the choice of $w$ in $\mu^{-1}(x)$. If we suppose that $A$ preserves the vertical vectors and the projectable vector fields for the projection $\mu$, then it descends to a map $\bar{A} \colon TX \longrightarrow TX$ which makes the following diagram commute
\begin{equation} \label{diag: A Abar}
\xymatrix{
&T\mcw \ar[d]_{d\mu} \ar[r]^{A}  &T\mcw \ar[d]^{d\mu}\\
&TX \ar[r]_{\bar{A}} &TX.
}
\end{equation}
\begin{definition}
If an endomorphism $A$ that satisfies the above conditions is such that none of its generalized eigenspaces is fully contained in $\ker(d\mu)$, then we will call it \emph{compatible} with the projection $\mu$ defined by its minimal polynomial.
\end{definition}
For $q(\lambda) \in X$ let us identify $T_{q(\lambda)}X \cong \C[\lambda]/ (q(\lambda))$ and assume that $A$ is compatible with $\mu$. Then $\bar{A}$ is naturally given by multiplication by $\lambda$ modulo $q(\lambda)$. 
Set also $\mcw_z=\left\{ w \in \mcw \vert~ z \in \Spec A_w\right\}$, i.e. $\mcw= \pi^{-1}(X_z)$ where $X_z$ is the set of all monic polynomials of degree $d$ for which $z$ is a root. With these definitions we see for a tangent vector $\b{V}$ that $\b{V} \in T\mcw_z \Longleftrightarrow d\mu(\b{V}) \in T X_z$, where the tangent space to $X_z$ at $q(\lambda)$ can be described as 
\begin{equation}
T_qX_z=\left\{p(\lambda) \vert ~ \deg p(\lambda)=d-1 \text{ and } p(z)=0 \right\}.
\end{equation}

Take now a polynomial $q'(\lambda) \in T_{q(\lambda)}X$ and $z \in C$: the definition of $\bar{A}$ implies that $(z\mathds{1}-\bar{A})(q'(\lambda))$ is a polynomial of $T_{q(\lambda)}X$ that vanishes at $z$ that is, by the commutativity of the diagram, $Im(z\mathds{1}-A) \subset T\mcw_z$.\\
Define now $\tilde{\mcw}=\left\{(z,w) \in \C \times \mcw \vert~ z \text{ is an eigenvalue of }A_w \right\}$, which is a $d \colon 1$ covering of $\mcw$, with  two projections 
\begin{equation}\label{diag: pi rho}
\xymatrix{
&\tilde{\mcw} \ar[d]_{\rho} \ar[dr]^{\pi}\\
&\C  &\mcw.
}
\end{equation}
Then $A$ can be lifted to an endomorphism of $T(\C \times \mcw)$, which we will still denote by $A$, preserving the vertical subbundles of $\rho$ and $\pi$. The previous observations imply that, at every point $(z,w)$, $A$ acts on the vertical subbundle of $\pi$ as multiplication by $z$ and that it descends to $T\tilde{\mcw}$.
Assuming now that the distribution $Im\left(z\mathds{1}-A\right)$ is integrable, we see that it defines a subdistribution of the integral distribution $\ker d\rho$.
We can therefore recover our initial surface $S$ as the leaf space
\begin{equation}
S \coloneqq \frs{\tilde{\mcw}}{Im\left(z\mathds{1}-A\right)}
\end{equation}
The surface $S$ comes with a natural projection $p \colon S \longrightarrow \C$ defined as $p([(z,w)])=z$, which makes the following diagram commute
\begin{equation}
\xymatrix{
&S \ar[d]_p & \tilde{\mcw} \ar[l]_{proj} \ar[dl]^{\rho}\\
&\C
}
\end{equation}
It is now sufficient to define $\mcu \subset S^{[d]}$ as $\mcu=\left\{proj(\pi^{-1}(w)) \vert ~ w \in \mcw \right\}$ and $Z=Z(w)=proj(\pi^{-1}(w)) \in \mcu$ in order to apply the previously exposed construction for getting $A_X \colon T_X\mcu \longrightarrow T_X\mcu$ i.e. once more our endomorphism $A_w \colon T_w\mcw \longrightarrow T_w\mcw$ for every point $w$ of $\mcw$.\\
Hence, keeping the conventions that we have introduced so far, we have proven the following.

\begin{prop} \label{prop: endom inverse construction}
Let $\mcw^{2d}$ be a complex manifold of complex dimension $2d$ with the following properties.
\begin{enumerate}[(i)]
\item $\mcw$ comes with an endomorphism $A \colon T\mcw \rightarrow T\mcw$ such that at every point the eigenspaces have complex dimension $2$ and the characteristic polynomial is the square of the minimal polynomial 
\item Assume that $A$ is compatible with the induced projection $\mu \colon \mcs \rightarrow X \coloneqq \C^{[d]}$, so that Diagram \eqref{diag: A Abar} is defined 


\item The distribution $D \coloneqq Im(z-A)$ is integrable on the incidence manifold $\tilde{\mcw}=\left\{(z,w) \in \C \times \mcw \vert~ z \text{ is an eigenvalue of }A_w \right\}$.
\end{enumerate}
Then $\rho \colon S\coloneqq \tilde{\mcw}/ D \rightarrow \C$ is a surface projecting on $\C$ for which $\mcw$ is the length $d$ Hilbert scheme of points transverse to the projection.
\end{prop}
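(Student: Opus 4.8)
The plan is to check that the manifold $S$, the projection $p$, and the incidence data produced just above are well defined and that this construction genuinely inverts the one of Section 2; I would carry it out in three steps.

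\emph{Step 1: $S$ is a complex surface projecting onto $\C$.} By property (i), at each $w$ the Jordan form of $A_w$ contains, for every root $z_i$ of the minimal polynomial $q_w$ of multiplicity $k_i$, exactly two Jordan blocks of size $k_i$; hence $\dim_{\C}\ker(z_i\mathds{1}-A_w)=2$ and $\rank(z_i\mathds{1}-A_w)=2d-2$, so $D=\Im(z\mathds{1}-A)$ is a rank-$(2d-2)$ distribution on $\tilde{\mcw}$. Since $\mu$ is a submersion onto $\C^{[d]}$, the incidence space $\tilde{\mcw}$ is the preimage of the smooth graph hypersurface $\{(z,q)\mid q(z)=0\}\subset\C\times\C^{[d]}$ under the submersion $(z,w)\mapsto(z,\mu(w))$, hence smooth of dimension $2d$, and $\rho$ is a submersion; therefore $D\subset\ker d\rho$ has corank one, and by (iii) it is integrable, so the leaf space $S=\tilde{\mcw}/D$ is a complex surface endowed with the induced holomorphic map $p\colon S\to\C$, $[(z,w)]\mapsto z$. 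This map is surjective because $\mu$ is, and it is a submersion away from the discrete image of the locus where the eigenvalues of $A$ collide, which plays the role of the set $B$ of Section 2.

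\emph{Step 2: the incidence map lands in $\sdp$.} For $w\in\mcw$ set $Z(w)=\mathrm{proj}(\pi^{-1}(w))$. As $\pi$ is a $d{:}1$ branched covering, $\pi^{-1}(w)$ is a length-$d$ zero-dimensional scheme with coordinate ring $\C[z]/(q_w(z))$; since $D\subset\ker d\rho$, the map $\mathrm{proj}$ keeps the leaves of $D$ through its points separated, so $Z(w)\subset S$ is again a length-$d$ subscheme, while $p(Z(w))=\rho(\pi^{-1}(w))$ is the zero-scheme of $q_w$, also of length $d$. Comparing lengths, $p|_{Z(w)}\colon Z(w)\to p(Z(w))$ is an isomorphism, so $Z(w)\in\sdp$, and $\Phi\colon w\mapsto Z(w)$ is holomorphic.

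\emph{Step 3: $\Phi$ is a biholomorphism intertwining the endomorphisms.} Bijectivity of $\Phi$ follows from the identification $\tilde{\mcw}\cong\{(s,Z)\in S\times\sdp\mid s\in Z\}$ already recorded, under which $\pi^{-1}(w)$ is the incidence fibre over $\Phi(w)$; and $d\Phi_w\colon T_w\mcw\to T_{Z(w)}\sdp\cong H^0(Z(w),TS\vert_{Z(w)})$ (Kodaira) is an isomorphism because, by the mechanism of \eqref{eq: two sequences}, $T\sdp$ is recovered from $\pi^*T\sdp$ modulo $\Im(z\mathds{1}-A)$, and applying that same mechanism to $\mcw$ reproduces $T_w\mcw$. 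Finally, running the recipe of Section 2 on $\mcu=\Phi(\mcw)$ produces on $T_{Z(w)}\sdp$ the operator ``multiplication by the preferred generator $z\in H^0(Z(w),\C)\cong\C[z]/(q_w(z))$'', which via $d\mu$ corresponds to multiplication by $\lambda\bmod q_w(\lambda)$ on $\C[\lambda]/(q_w(\lambda))\cong T_{\mu(w)}X$, i.e.\ to $\bar A$; by the commutativity of Diagram \eqref{diag: A Abar} this is precisely the operator induced by $A_w$. Hence $\Phi^{\ast}\bigl(A^{\sdp}\bigr)=A$ and $\mcw\cong\sdp$, as claimed.

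I expect the genuine difficulty to sit in Steps 1 and 2 over the critical locus of $p$: one must check that the leaf space $S$ is an honest complex manifold there, that $\mathrm{proj}$ restricts to a closed immersion on each $\pi^{-1}(w)$ — so that the scheme-theoretic transversality of $Z(w)$ persists through the multiplicity collisions encoded by the Jordan blocks of $A$ — and that the resolution computing $T\sdp$ stays well behaved across the branching of $\pi$. Away from that locus the whole argument collapses to the explicit companion-matrix computation of Example \ref{ex: rat map and AH}.
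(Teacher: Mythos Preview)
Your proposal is correct and follows essentially the same route as the paper: in fact the paper's ``proof'' is nothing more than the running construction of Section~3 that precedes the proposition, summarized in the sentence ``Hence, keeping the conventions that we have introduced so far, we have proven the following.'' You have reorganized that construction into three explicit steps and supplied the dimension counts (rank of $D$, smoothness and dimension of $\tilde{\mcw}$, corank of $D$ in $\ker d\rho$) and the transversality check $p\vert_{Z(w)}\colon Z(w)\xrightarrow{\sim} p(Z(w))$ that the paper leaves entirely implicit; the paper simply writes $\mcu=\{proj(\pi^{-1}(w))\}$ and asserts that Section~2 applies.

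Two small remarks. First, your claim in Step~1 that $p$ is a submersion away from a \emph{discrete} set overshoots what is needed and what the paper asserts: the discriminant locus in $\mcw$ has codimension one, and nothing in the hypotheses forces its image in $S$ to be discrete; the proposition only claims $S$ is a surface projecting onto $\C$. Second, the difficulties you flag at the end --- that the leaf space $\tilde{\mcw}/D$ is an honest complex manifold, and that $proj$ is a closed immersion on each $\pi^{-1}(w)$ through the Jordan collisions --- are real, and the paper does not address them either; both treatments are tacitly assuming the foliation is simple and that the scheme structure survives the quotient.
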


\section{A Symplectic Form}
This section will be devoted to revising, in both directions, the previously exposed construction when we assume our surface $S$ to carry a symplectic form $\omega$ on its tangent bundle. From now on we shall also assume that the projection $p \colon S \rightarrow \C$ is submersion outside at most a discrete subset $B \subset S$.\\
As we have already pointed out in the Introduction, the results of Sections $3$ and $4$ show that the transverse Hilbert scheme of points $\sdp$ gets the structure of a holomorphic completely integrable system. The importance of $A$ in distinguished whether a given holomorphic integrable system arises as the Hilbert scheme of points of a holomorphic symplectic surface is well motivated by the following example.

\begin{example}[Motivational Example]\rm
Let us consider the complex $2d$-dimensional manifold $\C^{2d}$, with coordinates $(z_i,t_i), i=1,\dots,d$ and endowed with the standard symplectic form $\Omega_0=\sum dz_i \wedge dt_i$, endowed with he projection $p_0\colon \C^{2d} \rightarrow \C^d$ given by $p_0(z_i,t_i)=(z_i)$. Observe that the coordinates $z_i$ are $d$ commuting Hamiltonian functions. Now, the projection $p_0$ induces an endomorphism $A\colon T\C^{2d} \rightarrow T\C^{2d}$ given, at every point of $\C^{2d}$, by the diagonal matrix $\diag(z_1,z_1, \dots, z_d, z_d)$. The eigenspaces of this endomorphism, however, show a jump in dimension whenever two eigenvalues happen to coincide. As a result of this, although $(\C^{2d},\Omega_0, z_1, \dots, z_d)$ is a holomorphic completely integrable system, it does not arise via a transverse Hilbert scheme construction as $A$ does not meet the necessary requirements.\\
We now consider a different $\C^{2d}$ from the one above, with a different projection. Namely we take the space $X_d$ of all couples of polynomials $\left( Q(\lambda), P(\lambda)\right)$ such that $Q$ is monic of degree $d$ and $T$ has degree $d-1$. If we write $Q=\lambda^d-\sum Q_j \lambda^j$ and $t=\sum T_i \lambda^i$ then $(Q_i,T_i)$ are global coordinates and $X\cong \C^{2d}$. Now, on the open dense subset $\mcv$ of $X$ consisting of couples $(Q(\lambda), T(\lambda)$ such that $Q$ has all distinct roots, we also have coordinates $(\beta_i, T(\beta_i))$ where $\beta_i$ are the roots of $Q$ and $T(\beta_i)$ the values of $T$ on those roots. In these latter coordinates the form $\sum d\beta_i \wedge T(\beta_i)$ is defined and it can uniquely extended to a holomorphic symplectic $2$-form $\Omega$ on the whole of $X_d$. The functions $Q_0, \dots, Q_{d-1}$ will then be $d$ commuting Hamiltonians with respect to $\Omega$: in fact, on the open dense subset $\mcv$ they are just the elementary symmetric polynomials in the roots $\beta_i$, hence they commute with each other on $\mcv$, so on all $X_d$. We set then the projection $p \colon X_d \rightarrow \C^d$, $p(Q_i, T_i)=(Q_i)$ and observe that $(X_d, \Omega, Q_0, \dots, Q_{d-1})$ is a holomorphic completely integrable system. The projection $p$ defines here an endomorphism $A$ of $TX_d$ which is represented by 
\begin{align}
\begin{pmatrix}
C_Q & 0 \\[6pt]
0 & C_Q
\end{pmatrix},
\end{align}
where $C_Q$ is the so-called companion matrix of the polynomial $Q(\lambda)$ and meets our requirements. Therefore, thanks to our results of sections 3 and 4, we can recover the holomorphic completely integrable system $(X_d, \Omega, Q_0, \dots, Q_{d-1})$ as the Hilbert scheme of $d$ points of the surface $\C \times \C$ transverse to the projection onto the first coordinate.

\end{example}

In \cite[Proposition 5]{AB83} Beauville proves that the full Hilbert scheme $S^{[d]}$ of a complex symplectic surface $(S,\omega)$ has a symplectic form induced by $\omega$. In the following Lemma we will explicitly recover his result on the transverse Hilbert scheme of $d$ points $\sdp$, which we know to be an open subset of the full Hilbert scheme. We remark that the existence of a symplectic form on the Hilbert scheme of $d$ points in $\C \times \C^*$ transverse for the projection $p \colon \C \times \C^* \rightarrow \C$ onto the first coordinate was pointed out by Atiyah-Hitchin in \cite[Chapter 2]{AH}, where an explicit formula is only given on the subset $\mcv \subset (\C \times \C^*)^{[d]}_p$ of $d$-tuples consisting of all distinct points.

\begin{lemma}\label{lemma: induced form on HS}
Let $p \colon S \rightarrow \C$ be a complex surface projecting onto $\C$ and assume that $p$ is a submersion outside a discrete set $B \subset S$. Assume also that $S$ possesses a holomorphic symplectic form $\omega$ Then $\omega$ induces a symplectic form $\Omega$ on the Hilbert scheme $\sdp$ of $d$ points in $S$ transverse to $p$.

\begin{proof}
We start by proving the Lemma in the case $d=2$.\\
Fix $d=2$ and define $M=\left(S\setminus B\right)^{[2]}_p$. Let $\mcv \subset S^{[2]}_p$ be the set of all $0$-dimensional subschemes of $S$ consisting of two distinct points. For every $Z \in M \cap \mcv$, i.e. consisting of two different points $p_1, p_2$ of $S$, the isomorphism $T_ZM \cong H^0(Z,TS\vert_Z)$ easily yields the symplectic form on $M \cap \mcv$:the map $\psi \colon S \times S \rightarrow S^{[2]}_p$ has no ramification on $\mcv$ hence is a $2$ to $1$ covering. By breaking the $\mathfrak{S}_2$-symmetry and choosing a sheet of $\psi$, that is ordering the couple $p_1, p_2$, one splits $H^0(Z,TS\vert_Z) \cong T_{p_1}S \oplus T_{p_2}S$  and defines $\Omega_Z=\omega_{p_1} \oplus \omega_{p_2}$. Since the local coordinates $(z,t)$ on $S \setminus B$ induce local coordinates $(z_1, t_1, z_2, t_2)$ around each $Z $ in $M \cap \mcv$ simply by evaluating on the points $p_1$ and $p_2$ of $Z \in M\cap \mcv$, we can locally write $\Omega=dz_1 \wedge dt_1 +dz_2 \wedge dt_2$ on $M \cap \mcv$. \\
We now have to extend the form $\Omega$ to those elements $W \in M$ which consist of one point $s \in S \setminus B$ taken with double multiplicity. In order to do so we adapt a construction by Bielawski, \cite{RB15}, in the following way.\\
Let $W \in M$ be as above and observe that since $p$ is a submersion on $M$, we can choose local coordinates $(z,t)$ around $s$ such that the first one is the base coordinate of $\C$. Moreover, we can choose them in such a way that $\omega= dz \wedge dt$: if this was not the case, i.e. $\omega= \omega(z,t) dz \wedge dt$, we could define a Darboux coordinate chart $(z, u)$ around $s$ simply by choosing a new holomorphic fibre coordinate $u$ such that $\del u / \del t = \omega(z,t)$. Of course such a $u$ can always be found as it amounts to finding a primitive of a holomorphic function on a simply connected domain. We then describe the open set $\mcu^{[2]}_s \subset M$ as the set of couples of polynomials $(q(z), t(z))$ such that $q$ is monic of degree $2$ and $t$ is linear, that is $q(z)=z^2-Q_1z-Q_0,~ t(z)=T_0+T_1z$. On $\mcu^{[2]}_s \cap \mcv$, i.e. where $q(z)$ has distinct roots $z_1$ and $z_2$ the polynomial $t(z)$ can be recovered by Lagrange interpolation from the values $t_1=t(z_1)$ and $t_2=t(z_2)$: this gives an equivalence between the two sets of coordinates $(z_1, t_1, z_2, t_2)$ and $(Q_0, Q_1, T_0,T_1)$. At this point we observe that the form $\Omega$ can be rewritten in the coordinates $(Q_i,T_i)$ as  $\Omega=Q_1dT_1\wedge dQ_1 + dT_1 \wedge dQ_0 + dT_0 \wedge dQ_1$, which is well defined, closed and non degenerate on the whole $\mcu^{[2]}_p$. Since, as we will prove in the next Lemma, this construction is independent of the choice of local coordinates $\omega$ induces a holomorphic symplectic form $\Omega$ on $M$. As $B$ is discrete in $S$ then $S^{[2]}_p \setminus M$ has codimension at least $2$ in $S^{[2]}_p$ therefore $\Omega$ extends to the whole $S^{[2]}_p$ by Hartog's Theorem.\\
In the $d>2$ case one proceeds exactly as above to get a form $\Omega$ defined on the set of all $Z \in \sdp$ consisting either of $d$ distinct points or of $(d-2)$ distinct points and one point which is taken with double multiplicity. Since the remaining subset has codimension greater than $2$ in $\sdp$ again the form $\Omega$ extends to the whole $\sdp$ by Hartog's Theorem.
\end{proof}
\end{lemma}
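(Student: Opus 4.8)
The plan is to obtain $\Omega$ in three stages: write it down by hand on the open stratum of reduced subschemes, extend it explicitly across the stratum of subschemes carrying a double point by means of polynomial coordinates, and then extend across the remaining high-codimension locus by Hartogs. A preliminary remark makes the bookkeeping painless: on the complement of a codimension-$\ge 2$ subset of $\sdp$ a transverse length-$d$ subscheme splits as a disjoint union of blocks of length $1$ and $2$ lying over distinct points of $\C$, and the form we construct is simply the direct sum of the forms attached to those blocks; so the essential content is the case $d=2$ (the case $d=1$ being $S^{[1]}_{p}=S$, $\Omega=\omega$), which I carry out in detail, the general case then following by the same local construction plus Hartogs. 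Fix $d=2$ and set $M=(S\setminus B)^{[2]}_{p}$, on which $p$ is a submersion.

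On the open set $\mcv\subset S^{[2]}_{p}$ of reduced subschemes $Z=\{p_1,p_2\}$, the natural map $\psi\colon S\times S\to S^{[2]}_{p}$ is unramified over $\mcv$, hence a $2:1$ cover there; a local choice of sheet splits, through the identification $T_ZS^{[2]}_{p}\cong H^0(Z,TS\vert_Z)$, the tangent space as $T_{p_1}S\oplus T_{p_2}S$, and I set $\Omega_Z=\omega_{p_1}\oplus\omega_{p_2}$. This is $\mathfrak{S}_2$-invariant, hence independent of the sheet; and in the coordinates $(z_1,t_1,z_2,t_2)$ obtained by pulling back Darboux coordinates $(z,t)$ on $S\setminus B$ and evaluating at $p_1$ and $p_2$ it reads $\Omega=dz_1\wedge dt_1+dz_2\wedge dt_2$. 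In particular $\Omega$ is holomorphic, closed and nondegenerate on $M\cap\mcv$.

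The crux is extending $\Omega$ across the discriminant, that is, across $W\in M$ supported at a single point $s\notin B$ taken twice. Since $p$ is a submersion at $s$, choose a holomorphic chart $(z,t)$ at $s$ with $z=p$ and, writing $\omega=g(z,t)\,dz\wedge dt$, replace $t$ by a primitive $u$ with $\del u/\del t=g$ (possible on a simply connected neighbourhood), so that $\omega=dz\wedge dt$. In this chart the nearby transverse length-$2$ subschemes form an open set $\mcu^{[2]}_{s}\subset M$ of pairs $(q(z)=z^2-Q_1z-Q_0,\, t(z)=T_0+T_1z)$ with global coordinates $(Q_0,Q_1,T_0,T_1)$; on $\mcu^{[2]}_{s}\cap\mcv$ Lagrange interpolation identifies these with $(z_1,t_1,z_2,t_2)$, and substituting into $dz_1\wedge dt_1+dz_2\wedge dt_2$ yields, after simplification, $\Omega=Q_1\,dT_1\wedge dQ_1+dT_1\wedge dQ_0+dT_0\wedge dQ_1$, which is holomorphic and closed on all of $\mcu^{[2]}_{s}$ and nondegenerate there (its square equals the nonvanishing volume form $2\,dT_1\wedge dQ_0\wedge dT_0\wedge dQ_1$). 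Granting that this extension is independent of the chosen Darboux chart at $s$ --- precisely the content of the next Lemma --- $\Omega$ is a well-defined holomorphic symplectic form on $M$. Since $B$ is discrete, $S^{[2]}_{p}\setminus M$ has codimension $\ge 2$ in the smooth manifold $S^{[2]}_{p}$, so $\Omega$ extends by Hartogs to a holomorphic $2$-form on $S^{[2]}_{p}$; it remains closed (as $d\Omega$ vanishes on the dense set $M$) and symplectic (as $\Omega^{\wedge 2}$ is a holomorphic volume form whose zero locus is an effective divisor supported in codimension $\ge 2$, hence empty). For $d>2$ the same two-stage construction gives $\Omega$ on the union of the strata of subschemes that are reduced or have exactly one double point, whose complement again has codimension $\ge 2$, and Hartogs concludes.

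The step I expect to be the genuine obstacle is exactly the coordinate-independence quoted above: one must check that the polynomial formula for $\Omega$ in the $(Q_i,T_i)$-coordinates transforms correctly under a change of Darboux chart $(z,t)$ at $s$, so that the locally constructed forms patch to a global one (this is the separate Lemma that follows), and --- intertwined with it --- one must verify nondegeneracy precisely along the discriminant, where the naive ``$\omega\oplus\omega$'' splitting of $H^0(Z,TS\vert_Z)$ collapses and has to be replaced by the polynomial description.
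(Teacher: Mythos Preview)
Your argument is correct and follows essentially the same route as the paper's proof: define $\Omega$ on the reduced locus via the splitting $T_{p_1}S\oplus T_{p_2}S$, extend across the discriminant using the polynomial coordinates $(Q_i,T_i)$ and the explicit formula $\Omega=Q_1\,dT_1\wedge dQ_1+dT_1\wedge dQ_0+dT_0\wedge dQ_1$, defer coordinate-independence to the next Lemma, and conclude by Hartogs. You add a couple of details the paper leaves implicit---the computation $\Omega^{\wedge 2}=2\,dT_1\wedge dQ_0\wedge dT_0\wedge dQ_1$ for nondegeneracy on the discriminant, and the divisor argument ensuring the Hartogs extension remains symplectic---which are welcome but do not change the strategy.
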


We now show that this construction does not depend of the choice of coordinates.
\begin{lemma}
The construction of Lemma \ref{lemma: induced form on HS} is independent of the choice of coordinates.
\begin{proof}
Again we start from the $d=2$ case, keeping the notation of the previous lemma.
Let $(z,t)$ and $(z,w)$ be two sets of Darboux coordinates adapted to the projection $p$ on $(S, \omega)$. Denote by $\phi$ the change of coordinates between $(z,t)$ and $(z,w)$, so that $\omega'=(\phi^{-1})^*\omega$ is defined. The same observation as in Lemma \ref{lemma: induced form on HS} yields a symplectic form $\Omega'$ on $M \cap V$ and a change of coordinates $\Phi$ such that $\Omega'=\Phi^* \Omega$.\\
Let now $E \in M$ be an element of $S^{[2]}_p$ consisting of a point $s \in S$ taken with double multiplicity and let $\mcu'$ be a coordinate neighbourhood of $s$ for the coordinates $(z,w)$. The description of $(\mcu')^{[2]}_p$ is then 
\begin{align*}
(\mcu')^{[2]}_{p}=\{(q(z),w(z)) \vert~ &\deg q(z)=2,\deg w(z)=1,\\[6pt]
&q \text { monic } \}
\end{align*}
where 
\begin{align*}
q(z)&=z^2-Q_1z-Q_0 \\[6pt]
w(z)&=W_1z+W_0.
\end{align*}
Observe that $(Q_i, W_i)$ are local coordinates on $(\mcu')^{[2]}_p$. By abuse of notation, we keep denoting by $\Phi$ the change of coordinates between $(Q_i, T_i)$ of Lemma \ref{lemma: induced form on HS} and $(Q_i, W_i)$ on the intersection $(\mcu)^{[2]}_p \cap (\mcu')^{[2]}_p$. Then $(\Phi^{-1})^*\Omega$ is defined on all $(\mcu)^{[2]}_p \cap (\mcu')^{[2]}_p$ and coincides with $\Omega'$ on $(\mcu')^{[2]}_p \cap (\mcu)^{[2]}_p \cap \mcv$, therefore being its unique holomorphic extension. We conclude by extending $\Omega'=(\Phi^{-1})^*\Omega$ to the whole $S^{[2]}_p$ via Hartog's Theorem.\\
The generalization to greater $d$ is again achieved by applying the $d=2$ construction to the subset of  all elements in $\sdp$ consisting of $d$ distinct points or $(d-2)$ distinct points and one double point and then by extension via Hartog's Theorem.
\end{proof}
\end{lemma}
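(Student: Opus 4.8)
The plan is to mimic the structure of the previous lemma: settle the case $d=2$ first, then bootstrap to general $d$ by running the argument on the stratum of subschemes with at most one double point and extending across the remaining codimension-$\geq 2$ locus by Hartogs' theorem. So I fix $d=2$ and keep the notation above: $(z,t)$ and $(z,w)$ are two Darboux charts adapted to $p$ around a double point $s$, giving local expressions $\Omega$ in the coordinates $(Q_0,Q_1,T_0,T_1)$ and $\Omega'$ in $(Q_0,Q_1,W_0,W_1)$ for the form on $\sdp$, and the task is to show they define the same $2$-form on the overlap.

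First I would pin down the transition map of $S$. Both charts have first coordinate $p$, so the transition fixes $z$; and since $dz\wedge dt=\omega=dz\wedge dw$, the identity $dz\wedge dw=(\del w/\del t)\,dz\wedge dt$ forces $\del w/\del t\equiv 1$, hence $w=t+g(z)$ for some holomorphic function $g$ of the base variable alone. On $\sdp$ this induces $(Q_i,T_i)\mapsto(Q_i,T_i+\bar g_i(Q))$, where $\bar g_1(Q)\lambda+\bar g_0(Q)$ is the reduction of $g(\lambda)$ modulo $q(\lambda)=\lambda^2-Q_1\lambda-Q_0$. This explicit normal form is optional — it turns the verification into a one-line computation — but the cleaner route below avoids it.

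The key observation is that $\Omega$ and $\Omega'$ agree on the open dense subset $\mcv\subset\sdp$ of reduced subschemes. Over $\mcv$ the construction identifies $T_Z\sdp$ with $\bigoplus_i T_{p_i}S$ after an ordering of the points and sets the form equal to $\bigoplus_i\omega_{p_i}$, an expression built only from $(S,\omega)$; equivalently, in the evaluation coordinates one has $w_i=t_i+g(z_i)$, so $dz_i\wedge dw_i=dz_i\wedge dt_i$ (the term $g'(z_i)\,dz_i\wedge dz_i$ vanishes) and the two local formulas coincide on $\mcv$. Now by Lemma \ref{lemma: induced form on HS} both $\Omega$ and $\Omega'$ are holomorphic $2$-forms on a neighbourhood in $\sdp$ of each element consisting of one point with double multiplicity, and they agree on the dense open subset $\mcv$ of that neighbourhood; by uniqueness of holomorphic continuation they agree on the whole neighbourhood. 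Thus the globally defined $\Omega$ of the previous lemma is determined by its restriction to $\mcv$, which is chart-independent, and so the construction does not depend on the choices made. For general $d$ one repeats this on the locus of subschemes with at most one double point and extends by Hartogs, exactly as before.

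I do not anticipate a substantive obstacle: the statement is essentially a formal corollary of Lemma \ref{lemma: induced form on HS} once one notes that holomorphic $2$-forms agreeing on a dense open set are equal. The one point needing care — and it is already supplied in the proof of Lemma \ref{lemma: induced form on HS} — is the identification on $\mcv$ of the formula-defined $\Omega$ with the intrinsic form $\bigoplus_i\omega_{p_i}$, i.e. the correctness of the Lagrange-interpolation change of coordinates $(z_1,t_1,z_2,t_2)\leftrightarrow(Q_0,Q_1,T_0,T_1)$; granting that, density of $\mcv$ together with holomorphicity of the two extensions closes the argument.
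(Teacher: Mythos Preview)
Your proposal is correct and follows essentially the same route as the paper: both argue that the two locally defined holomorphic forms agree on the dense open subset $\mcv$ of reduced subschemes (where the form is intrinsically $\bigoplus_i\omega_{p_i}$), hence agree everywhere by uniqueness of holomorphic continuation, and then pass to general $d$ via the ``at most one double point'' stratum and Hartogs. The only cosmetic differences are that the paper phrases the comparison via the induced transition map $\Phi$ and the identity $(\Phi^{-1})^*\Omega=\Omega'$, whereas you compare the two forms directly, and you add the (optional and correct) normal form $w=t+g(z)$ for the transition on $S$, which the paper does not make explicit.
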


\begin{corollary}\label{cor: Compatibility A Omega}
The endomorphism $A$ and the symplectic form $\Omega$  satisfy the condition $\Omega(A\cdot, \cdot)=\Omega(\cdot, A\cdot)$.
\begin{proof}
It suffices to show the claim on the open dense subset $\mcv\subset\sdp$ of elements consisting of all distinct points. But there we can use coordinates that are both Darboux for $\Omega$ and diagonalizing $A$, so the assertion is trivially verified.
\end{proof}
\end{corollary}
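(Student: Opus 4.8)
The plan is to reduce everything to the open dense subset $\mcv \subset \sdp$ consisting of length $d$ subschemes supported on $d$ distinct points, and then invoke a density/continuity argument to conclude on all of $\sdp$. First I would observe that the identity $\Omega(A\cdot,\cdot) = \Omega(\cdot,A\cdot)$ is the vanishing of a global holomorphic section of $\Lambda^2 T^*\sdp$ (namely the $2$-form $(\b{u},\b{v}) \mapsto \Omega(A\b{u},\b{v}) - \Omega(\b{u},A\b{v})$, which is antisymmetric because $\Omega$ is and because swapping the two slots negates the expression); since $\sdp$ is connected and $\mcv$ is dense, it suffices to check the identity on $\mcv$. On $\mcv$, by the construction in Lemma \ref{lemma: induced form on HS}, one has adapted Darboux coordinates $(z_1,t_1,\dots,z_d,t_d)$ obtained by evaluating a local Darboux chart $(z,t)$ of $(S,\omega)$ at the $d$ distinct points of $Z$, in which $\Omega = \sum_{i=1}^d dz_i \wedge dt_i$. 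In these same coordinates, as computed in Example \ref{ex: rat map and AH} (and the remark preceding it), $A$ acts by multiplication by the function $z \in H^0(Z,\C)$, which on the distinct-points locus is simply evaluation, so $A = \diag(z_1,z_1,\dots,z_d,z_d)$ — i.e. $A\,\partial_{z_i} = z_i\,\partial_{z_i}$ and $A\,\partial_{t_i} = z_i\,\partial_{t_i}$.

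With these two facts in hand the computation is immediate: for basis vectors, $\Omega(A\partial_{z_i},\partial_{t_i}) = z_i\,\Omega(\partial_{z_i},\partial_{t_i}) = z_i$, while $\Omega(\partial_{z_i}, A\partial_{t_i}) = z_i\,\Omega(\partial_{z_i},\partial_{t_i}) = z_i$, and all other pairings of basis vectors vanish on both sides since $A$ is diagonal and $\Omega$ is the standard form pairing $\partial_{z_i}$ with $\partial_{t_i}$ only. Hence $\Omega(A\cdot,\cdot) = \Omega(\cdot,A\cdot)$ holds identically on $\mcv$, and therefore on all of $\sdp$ by the density argument above. This is essentially the one-line proof already sketched in the excerpt; the only point that deserves a word is why the coordinates can be chosen to be simultaneously Darboux for $\Omega$ and diagonalizing for $A$, which is precisely the content of the first paragraph: the evaluation coordinates coming from a Darboux chart on $S$ do both jobs at once.

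The only mild obstacle is the passage from $\mcv$ to all of $\sdp$: one must make sure the $2$-form $(\b{u},\b{v}) \mapsto \Omega(A\b{u},\b{v}) - \Omega(\b{u},A\b{v})$ is genuinely holomorphic (hence its zero set is closed and, being dense, is everything), which follows because both $\Omega$ and $A$ are holomorphic tensors on $\sdp$ by the constructions of Section 2 and Lemma \ref{lemma: induced form on HS}. Alternatively, and perhaps more in the spirit of the paper, one can avoid continuity entirely by redoing the check on the locus of $(d-2)$ distinct points plus one double point using the explicit coordinate formulas for $\Omega$ (in the $(Q_i,T_i)$ coordinates) and for $A$ (the companion-matrix block form of Example \ref{ex: rat map and AH}) and noting $\codim(\sdp \setminus \mcv') \geq 2$; but since the statement is an identity of holomorphic tensors and $\mcv$ is already dense, the density argument is cleanest and is what I would write.
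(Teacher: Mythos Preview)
Your proof is correct and follows exactly the paper's approach: reduce to the dense open set $\mcv$ of configurations of distinct points, where the evaluation coordinates $(z_i,t_i)$ are simultaneously Darboux for $\Omega$ and diagonalizing for $A$, check the identity there, and extend by density since both $\Omega$ and $A$ are holomorphic. Your write-up is simply a more detailed unpacking of the paper's two-line argument.
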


\begin{corollary}
The transverse Hilbert scheme of points $\sdp$ is a holomorphic completely integrable system
\begin{proof}
This is an immediate consequence of Corollary \ref{cor: Compatibility A Omega}: the coefficients  $Q_i$ of the minimal polynomial of $A$ are $d$ Poisson-commuting functions for the Poisson structure associated to $\Omega$ on the dense subset $\mcv$, hence on all $\sdp$.
\end{proof}
\end{corollary}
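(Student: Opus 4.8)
The plan is to verify the two defining properties of a holomorphic completely integrable system on a $2d$-dimensional manifold: the presence of a holomorphic symplectic form, which we already have as the form $\Omega$ of Lemma \ref{lemma: induced form on HS}, and the existence of $d$ holomorphic functions that pairwise Poisson-commute with respect to $\Omega$ and whose differentials are independent on a dense open set. The natural choice of Hamiltonians is the list $Q_0,\dots,Q_{d-1}$ of coefficients of the minimal polynomial $q_Z(\lambda)=\lambda^d-\sum_j Q_j\lambda^j$ of $A_Z$; equivalently these are the components of $p^{[d]}\colon\sdp\to\C^{[d]}$ read in the symmetric-function coordinates on $\C^{[d]}\cong S^d(\C)$, so they are globally defined holomorphic functions on $\sdp$.

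First I would work on the dense open subset $\mcv\subset\sdp$ of subschemes supported on $d$ pairwise distinct points of $S\setminus B$. There the unramified covering of $\mcv$ by $d$-tuples of distinct points of $S\setminus B$ lets one pick a local sheet and thereby local coordinates $(z_1,t_1,\dots,z_d,t_d)$ which, by the explicit computation carried out in Lemma \ref{lemma: induced form on HS}, are Darboux for $\Omega$, i.e. $\Omega=\sum_i dz_i\wedge dt_i$, and in which $A$ is simultaneously diagonal, $A=\diag(z_1,z_1,\dots,z_d,z_d)$. In these coordinates $Q_k$ equals, up to sign, the $(d-k)$-th elementary symmetric polynomial of $z_1,\dots,z_d$; in particular each $Q_k$ depends on the position variables $z_1,\dots,z_d$ alone, so $\{Q_i,Q_j\}_\Omega=0$ identically on $\mcv$, and the Jacobian of the map $(z_i)\mapsto(Q_k)$ is a Vandermonde-type determinant, nonzero exactly where the $z_i$ are pairwise distinct, hence nonzero everywhere on $\mcv$.

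It then remains to propagate these two facts from $\mcv$ to $\sdp$. For the commutativity this is immediate: each bracket $\{Q_i,Q_j\}_\Omega$ is a holomorphic function on $\sdp$ — recall from Lemma \ref{lemma: induced form on HS} that $\Omega$, and hence the associated Poisson structure, is defined on all of $\sdp$ — and it vanishes on the dense set $\mcv$, therefore vanishes everywhere; this is exactly the content invoked through Corollary \ref{cor: Compatibility A Omega}, whose identity $\Omega(A\cdot,\cdot)=\Omega(\cdot,A\cdot)$ is the coordinate-free reason that the spectral invariants of $A$ are in involution. For the independence, the differentials $dQ_0,\dots,dQ_{d-1}$ are already linearly independent on the dense open subset $\mcv$, which is all the definition of a completely integrable system demands. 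Hence $(\sdp,\Omega,Q_0,\dots,Q_{d-1})$ is a holomorphic completely integrable system. I do not expect a genuine obstacle here: the only delicate point, namely that $\Omega$ and the $Q_i$ live and are holomorphic not merely on $\mcv$ but on all of $\sdp$, has already been settled by Lemma \ref{lemma: induced form on HS} and the discussion preceding it, so what is left really is the one-line consequence of Corollary \ref{cor: Compatibility A Omega} stated above.
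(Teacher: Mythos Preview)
Your proposal is correct and follows essentially the same approach as the paper: choose the coefficients $Q_i$ of the minimal polynomial of $A$ as Hamiltonians, verify on the dense open set $\mcv$ (using the Darboux coordinates in which $A$ is diagonal) that they Poisson-commute, and extend by density. Your write-up is simply a more detailed unpacking of the paper's one-line argument, and you additionally spell out the independence of the $dQ_i$ via the Vandermonde determinant, a point the paper leaves implicit.
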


\begin{example}\rm
A basic example summarizing what we have done is given by taking $S=\C \times \C$, $p \colon \C \times \C \rightarrow \C$ defined by $(x,y) \mapsto z=xy$ and $\omega=dx\wedge dy$. Of course $p$ is a submersion away from the origin and $\{(0,0)\}$ is a codimension $2$ subset of $S$. Hence on $(\C \times \C) \setminus_{\{(0,0)\}}=\{x≠0\} \cup \{y≠0\}=\mcu_1 \cup \mcu_2 $ we proceed exactly as in Example \ref{ex: rat map and AH} and apply our construction taking coordinate $(z, \chi_1)$ on $\mcu_1$ and $(z, \chi_2)$ on $\mcu_2$ where $\chi_1=-\log(x)$ and $\chi_2=\log(y)$. Observe that on $\mcu_1$ we write $\omega=dz\wedge d\chi_1$ and $\omega=dz \wedge d\chi_2$ on $\mcu_2$ and that they agree on the overlap $\mcu_1 \cap \mcu_2$. Each patch can be described as the set $\{(q(z), p(z)) \vert ~ q \text{ is monic of $\deg d$},~p \text{ has $\deg d-1$ and } p(0)≠0 \text{ if } q(0)=0 \}$. The construction now yields the symplectic form $\Omega$ on $\left((\C \times \C) \setminus B\right)^{[2]}_{p}=\{E \in (\C \times \C)^{[2]}_{p} \vert~ (0,0) \notin E \}.$ Since the complementary set to $\left((\C \times \C) \setminus B\right)^{[2]}_{p}$ has codimension $2$, we get $\Omega$ on the whole $(\C \times \C)^{[2]}_{p}$ applying Hartog's Theorem.
\end{example}

In the following proposition we work out the inverse construction in order to recover the holomorphic $2$-form initially given on the surface $S$ starting from the induced completely integrable system.
\begin{prop}
Let $\mcw$ be a complex manifold of complex dimension $2d$ endowed with an holomorphic endomorphism of the tangent space $T\mcw$ as in Proposition \ref{prop: endom inverse construction}. Assume also that $\mcw$ possesses a symplectic form $\Omega$ such that
\begin{itemize}
\item $\Omega(A \cdot, \cdot)=\Omega(\cdot,A\cdot)$
\item Each fiber $\mu^{-1}(x)$, for $x \in X=\C^{[d]}$ is Lagrangian, i.e. the vertical subbundle $\ker (d\mu)$ is maximal $\Omega$-isotropic.
\end{itemize}
Then $S=\tilde{\mcw}/D$ has a symplectic form induced by $\omega$
\end{prop}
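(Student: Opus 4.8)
The plan is to build the symplectic form on $S$ by pushing down, along the leaves of $D=\mathrm{Im}(z\mathds{1}-A)$, a suitable closed $2$-form on the incidence manifold $\tilde{\mcw}$, first on a large open set where one can write down adapted coordinates and then extending by Hartogs' theorem, in close analogy with the proof of Lemma \ref{lemma: induced form on HS}. I would begin on the open dense set $\mcv\subset\mcw$ where $A_w$ has $d$ distinct eigenvalues. The relation $\Omega(A\cdot,\cdot)=\Omega(\cdot,A\cdot)$ forces eigenspaces of distinct eigenvalues to be $\Omega$-orthogonal: if $u\in\ker(A-z_i)$, $v\in\ker(A-z_j)$ then $z_i\Omega(u,v)=\Omega(Au,v)=\Omega(u,Av)=z_j\Omega(u,v)$, so $\Omega(u,v)=0$ when $z_i\ne z_j$. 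Hence over $\mcv$ one has $T\mcw=\bigoplus_i E_i$ with each $E_i$ a $2$-plane bundle and $\Omega=\bigoplus_i\Omega|_{E_i}$ block-diagonal, each block $\Omega|_{E_i}$ being a (automatically closed, nondegenerate) area form on the surface direction $E_i$.

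Next I would produce, near any point of $\mcv$, local coordinates $(z_1,t_1,\dots,z_d,t_d)$ on $\mcw$ which are simultaneously Darboux for $\Omega$ and diagonalize $A$. Since every common level set of the eigenvalue functions $z_1,\dots,z_d$ is a fibre of $\mu$, the hypothesis that the fibres of $\mu$ are Lagrangian says exactly that the $z_i$ pairwise Poisson-commute, and they are independent because $\mu$ is a submersion; so the Carathéodory–Jacobi–Lie theorem supplies conjugate functions $t_i$ with $\Omega=\sum_i dz_i\wedge dt_i$ and $\mu=(z_1,\dots,z_d)$ in root coordinates on $X$. In such a chart $A$ preserves the vertical $\ker d\mu=\langle\partial_{t_1},\dots,\partial_{t_d}\rangle$ and is self-adjoint, which forces $A\partial_{t_i}=z_i\partial_{t_i}$; the intertwining $d\mu\circ A=\bar A\circ d\mu$ with $\bar A$ multiplication by $\lambda$ on $T_qX\cong\C[\lambda]/(q)$ gives $A\partial_{z_i}=z_i\partial_{z_i}+\sum_k c_{ik}\partial_{t_k}$ with $(c_{ik})$ antisymmetric. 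Here hypothesis (iii) enters: the integrability of $D$, read on the $d$ sheets of $\tilde{\mcv}\to\mcv$, amounts to the integrability of each $\bigoplus_{j\ne i}E_j$, and a short bracket computation shows this is equivalent to the $c_{ik}$ being functions of $z$ only together with the consistency conditions needed to solve $\partial_{z_i}\partial_{z_k}G=c_{ik}/(z_k-z_i)$ for a function $G(z)$ (for $d=2$ this is simply the statement that $c_{12}=c_{12}(z_1,z_2)$, and nothing more is needed). Replacing $t_i$ by $t_i+\partial_{z_i}G$ keeps $\Omega$ in Darboux form and turns $A$ into $\diag(z_1,z_1,\dots,z_d,z_d)$, so $E_i=\langle\partial_{z_i},\partial_{t_i}\rangle$. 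Establishing the existence of these coordinates — equivalently, that (iii) yields precisely the compatibility of the $c_{ik}$ making $A$ simultaneously diagonalizable in a Darboux chart — is the main technical obstacle; everything after it is essentially formal.

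Granting such coordinates, consider on $\tilde{\mcv}$ the spectral restriction $\Theta:=\Omega\big(P_\zeta\,\cdot\,,P_\zeta\,\cdot\,\big)$, where $P_\zeta$ is the projection of $T\mcw$ onto the eigenspace $\ker(A-\zeta)$ for the tautological eigenvalue $\zeta=\rho$. On the $\ell$-th sheet $\Theta$ is $dz_\ell\wedge dt_\ell$, hence closed, and $\ker\Theta=\mathrm{Im}(\zeta\mathds{1}-A)=D$; together with $\iota_V\Theta=0$ for $V\in D$ Cartan's formula gives $\mathcal{L}_V\Theta=0$, so $\Theta$ is basic for the (integrable) foliation $D$ and descends to a unique $2$-form $\omega$ on the open subset of $S=\tilde{\mcw}/D$ swept out by $\tilde{\mcv}$. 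It is closed because the quotient map is a submersion, and nondegenerate because its kernel at a point is the image of $\ker\Theta=D$ under that submersion, i.e. zero; in the chart above $\omega=dz\wedge dt$ with $z=p$, which is also what shows that applying the construction of Proposition \ref{prop: endom inverse construction} and Lemma \ref{lemma: induced form on HS} to $(S,\omega,p)$ returns $(\mcw,\Omega,A)$ (compare Corollary \ref{cor: Compatibility A Omega}).

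It remains to extend $\omega$ over the exceptional locus. The construction works near every point $s\in S$ outside the critical set $B$ of $p$: such an $s$ lies in a transverse length-$d$ subscheme all of whose points avoid $B$ and have distinct $p$-values (possible since $B$ is discrete), and the adapted coordinates are available near that subscheme even when it has multiple points, exactly as with the companion-matrix coordinates $(Q_i,T_i)$ in the proof of Lemma \ref{lemma: induced form on HS}. Since $B$ has complex codimension $2$ in the surface $S$, Hartogs' theorem extends $\omega$ to a holomorphic symplectic form on all of $S$, as claimed.
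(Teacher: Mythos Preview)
Your approach is sound in outline but takes a quite different route from the paper's. The paper constructs the induced form on $S$ directly and coordinate-free: choose any $\b{V}_z\in\ker(z\mathds{1}-A)$ lifting a nonzero $\b{v}_z\in\ker(z\mathds{1}-\bar A)$ on $X_z$, set $\alpha_z=\iota_{\b{V}_z}\Omega$, and form $\tau=\rho^*dz\wedge\pi^*\alpha_z$ on $\tilde\mcw$. The two hypotheses are used precisely to show that $\alpha_z$ is independent of the chosen lift (self-adjointness of $A$ gives $\ker(z\mathds{1}-A)\perp_\Omega Im(z\mathds{1}-A)$, and the Lagrangian condition kills the remaining $\ker d\mu$-ambiguity); then $\tau(\cdot,D)=0$, so $\tau$ descends to $\bar\tau$ on $S$, which is closed for free (it is a $2$-form on a surface) and whose nondegeneracy comes from a short symplectic-orthogonal dimension count identifying $(T_w\mcw_z)^\Omega=\ker(z\mathds{1}-A)\cap\ker d\mu$. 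No adapted Darboux chart and no Hartogs extension are needed; the construction is global on $\tilde\mcw$, and the final paragraph checks on the dense set $\tilde\mcv$ that in the forward-then-backward situation one recovers the original $\omega$.

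Your route, by contrast, invests most of its effort in producing simultaneous Darboux/diagonalizing coordinates on $\mcv$ and then defines the form as the spectral block $\Omega\vert_{E_\ell}$. This makes the identification $\omega=dz\wedge dt$ very explicit and gives a pleasant interpretation of hypothesis~(iii) as exactly the consistency needed to gauge away the off-diagonal terms $c_{ik}$; but it forces you through the ``main technical obstacle'' you flag, which you only sketch, and then through an extension step at the end that is somewhat circular in this direction: the ``critical set $B$'' of $p$ and its codimension are features of the forward construction, not data you have a~priori when starting from $(\mcw,A,\Omega)$. The paper's argument sidesteps both issues by never leaving the coordinate-free description and by getting closedness for free on the $2$-dimensional quotient.
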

\begin{proof}
We will obtain a symplectic form on $S=\tilde{\mcw}/D$ by defining on $\tilde{\mcw}$ a $2$-form $\tau$ of the form $\tau=\rho^*dz \wedge \alpha_z$ in the notation of Diagram \ref{diag: pi rho}, with $\alpha_z$ a $1$-form on $\tilde{\mcw}_z=\rho^{-1}(z)=\{ (w,z) \in \tilde{\mcw} \vert~ z \text{ is eigenvalue of } A_w\}$, such that $\tau(X,Y)=0$ for every $X \in T\tilde{\mcw}$ and $Y \in D$.\\
By the commutativity of Diagram \ref{diag: pi rho} and the surjectivity of $d\mu$, $\ker(z\mathds{1}-A)$ maps surjectively onto $\ker(z\mathds{1}-\bar{A})$. At this point we define $\mcw_z=\pi(\tilde{\mcw}_z)$ and we observe that the restriction $\pi \vert_{\tilde{\mcw}_z} \colon \tilde{\mcw}_z \rightarrow \mcw_z$ is obviously a diffeomorphism for every $z \in \C$m therefore $\mcw_z$ comes comes with a manifold structure. We can then choose a vector field $\b{v}_z$ on $X_z=\mu(\mcw_z)$ such that $\b{v}_z \in \ker(z\mathds{1}-\bar{A})$ at every point of $X_z$ and lift it to a vector $\b{V}_z$ tangent to $\mcw_z$ with $\b{V}_z \in \ker(z\mathds{1}-A)$. This lift is not uniquely determined: if $\b{V}'_z \in \ker(z\mathds{1}-A)$ is a second such lift, then $\b{V}_z-\b{V}'_z \in \ker(z\mathds{1}-A) \cap \ker{d\mu}$.\\
Define now the $1$-form $\alpha_z= \iota_{\b{V}_z} \Omega$ on $T\mcw_z$. This definition does not depend of the choice of $\b{V}_z$. In fact, at every $w \in \mcw_z$, one can split the tangent space to $\mcw_z$ as
\begin{equation}
T_w\mcw_z \cong Im(z\mathds{1}-A) \oplus \left\langle L \right\rangle 
\end{equation}
where $L \in \ker(z\mathds{1}-A) \cap \ker(d\mu)$. Now, since $\Omega(A \cdot, \cdot)=\Omega(\cdot, A\cdot)$ we have that $\ker(z\mathds{1}-A)$ and $Im(z\mathds{1}-A)$ are $\Omega$-orthogonal. This implies $\iota_{\b{V}_z-\b{V}'_z}\Omega(X)=0$ for every $X \in Im(z\mathds{1}-A$. Also, since $\ker(d\mu)$ is Lagrangian by assumption, we have $\iota_{\b{V}_z-\b{V}'_z}\Omega(L)=0$. Hence $\iota_{\b{V}_z-\b{V}'_z}\Omega=\alpha_z-\alpha'_z=0$ on all $\mcw_z$, meaning $\alpha_z$ is well defined.\\
Since $\pi\vert_{\tilde{\mcw}_z}$ is a diffeomorphism for every $z \in \C$ , $d\pi$ is an isomorphism and we can therefore pull $\alpha_z$ back to $\tilde{\mcw}_z$ via $\pi$  and define $\tau=\rho^* dz \wedge \pi^* \alpha_z$. As $D=Im(z\mathds{1}-A)$ satisfies $\tau(\cdot,D)=0$, the form $\tau$ descends to a form $\bar{\tau}$ on $S=\tilde{\mcw}/D$. We now prove that $\bar{\tau}$ is symplectic. First of all, $d\bar{\tau}=0$ as $\bar{\tau}$ is a $2$-form on a $2$-dimensional space. In order to prove its non-degeneracy we proceed as follows. First we observe that at every point $[(z,w)]$ of $S$ we have 
\begin{equation}
T_{[(z,w)]}S \cong \left\langle Y \right\rangle \oplus T_w \tilde{\mcw}_z/D_{(z,w)}
\end{equation}
where $Y$ is a vector in $T_w\tilde{\mcw}$ such that $d\rho (Y)=\del/\del z$. Take now $W \in T_w \tilde{\mcw}_z$ such that $[W] ≠0$ in $T_w \tilde{\mcw}_z/D$ and compute 
\begin{align}
(\rho^* dz \wedge \pi^*\alpha_z)_{[(z,w)]}(Y,W)=\pi^*\alpha_z(W)=\Omega((\b{V}_z)_w, d\pi_w(W)) ≠0
\end{align}
otherwise we would have $(\b{V}_z)_w \in (T_w\mcw_z)^{\Omega}$, where we denote with the superscript $\Omega$ the symplectic orthogonal complement. Now one observes that because both $Im(z\mathds{1}-A)$ and $\ker(d\mu)$ are contained in $T_w\mcw_z$ then $(T_w\mcw_z)^{\Omega} \subseteq\ker(z\mathds{1}-A) \cap \ker(d\mu)^{\Omega}=\ker(z\mathds{1}-A) \cap \ker(d\mu)$ as $\ker(d\mu)$ is Lagrangian. By counting dimensions we actually have $(T_w\mcw_z)^{\Omega} =\ker(z\mathds{1}-A) \cap \ker(d\mu)$. But this would imply $\b{V}_z \in \ker(d\mu)$ at $w$, which is in contrast with the fact that $\b{V}_z$ was constructed as a lift of a vector field $\b{v}_z$.\\
As a last step we prove that when $(\mcw, \Omega)$ is constructed as the transverse Hilbert scheme of a symplectic surface $(S,\omega)$ projecting onto $\C$ via $p$ with the symplectic form $\Omega$ induced by $\omega$ then, once we recover $S$ as $\tilde{\mcw}/D$ we also get back the original symplectic form $\omega$.\\
Let us write $\omega=dz \wedge \phi_z$, where $\phi_z$ is a $1$-form defined on the fibre $p^{-1}(z)$. Then on the usual open dense subset $\mcv \subset \mcw$ of all $d$-tuples of distinct points we have $\Omega=\sum dz_i \wedge \phi_{z_i}$. Let $\tilde{\mcv}=\{(z,w) \in \tilde{\mcw} \vert z \text{ has multiplicity exactly } 2\}$ and note that $\tilde{\mcv}$ is open and dense in $\tilde{\mcw}$. Call $r \colon \tilde{\mcw} \rightarrow S$ the canonical projection onto the space of leaves: we have $cl(r(\tilde{\mcv}))=cl(r(cl(\tilde{\mcv}))=S$, where $cl$ stands for the topological closure. Hence $r(\tilde{\mcv})$ is dense in $S$. Moreover, as the canonical projection onto the space of leaves of a foliation is always an open map \cite[pag.47, Theorem 1]{CLN}, $r(\tilde{\mcv})$ is open in $S$. Since on $\mcv$ we have $\b{V}_{z_i}=\del/\del z_i$ for $i=1, \dots, d$, then $\iota_{\b{V}_z}\Omega=\phi_z$ for every $z$ and it is clear that $\bar{\tau}$ agrees with $\omega$ on $r(\tilde{\mcv})$, hence $\omega$ and $\bar{\tau}$ coincide as claimed on the whole $S$.
\end{proof}

\bibliographystyle{amsplain}
\bibliography{Bibliography_HS}

\end{document}